\begin{document}
\newtheorem{theorem}{Theorem}
\newtheorem{definition}[theorem]{Definition}
\newtheorem{remark}[theorem]{Remark}
\newtheorem{example}[theorem]{Example}
\newtheorem{lemma}[theorem]{Lemma}
\newtheorem{proposition}[theorem]{Proposition}
\newtheorem{corollary}[theorem]{Corollary}

\title{Deterministic compressed sensing matrices: Construction via Euler Squares and applications}
\author{R. Ramu Naidu, Phanindra Jampana and C. S. Sastry
\thanks{R. Ramu Naidu and C. S. Sastry are with the Department
of Mathematics, Indian Institute of Technology, Hyderabad, Telangana,
502285, India, e-mail: ma11p003@iith.ac.in, csastry@iith.ac.in}
\thanks{Phanindra Jampana is with the Department
of Chemical Engineering, Indian Institute of Technology, Hyderabad, Telangana,
502285, India, e-mail: pjampana@iith.ac.in}
\thanks{Copyright (c) 2015 IEEE. Personal use of this material is permitted.
However, permission to use this material for any other purposes must be
obtained from the IEEE by sending a request to pubs-permissions@ieee.org.}}

\maketitle


\begin{abstract}
In Compressed Sensing the matrices that satisfy the Restricted Isometry Property (RIP) play an important role.
 But  to date, very few results  for designing such matrices are available. For applications such as multiplier-less data compression, binary sensing matrices are of interest. The present work constructs deterministic and binary sensing matrices using Euler Squares. In particular, given a positive integer $m$ different from $p, p^2$ for a prime $p$, we show that it is possible to construct a binary sensing matrix  of size $m \times c (m\mu)^2$, where $\mu$ is the coherence parameter of the matrix and $c \in [1,2)$. The matrices that we construct have small density (that is, percentage of nonzero entries in the matrix is small) with no function evaluation in their construction, which support algorithms with low computational complexity. Through experimental work, we show that our binary sensing matrices can be used for such  applications as content based image retrieval. Our simulation results demonstrate that the Euler Square based CS matrices give better performance than their Gaussian counterparts. 

\end{abstract}

\begin{IEEEkeywords}
Compressed Sensing, Coherence, RIP, Binary sensing matrices, Euler Squares, CBIR.
\end{IEEEkeywords}

\section{Introduction}
Compressed Sensing (CS) aims at recovering high dimensional sparse vectors from considerably fewer linear measurements.
The problem of sparse recovery through $l_{0}$ norm minimization  is not  tractable. E. Candes \cite{can_2008}, D. Donoho \cite{Donoho_2006},
 have made pioneering contributions reposing the problem as a simple Linear Programming Problem (LPP). They have
then established the conditions that ensure the stated equivalence between the original $l_{0}$ problem and its reposed
version. It is known that Restricted Isometry Property (RIP) is one sufficient condition to ensure the equivalence. Random matrices with 
Gaussian or Bernoulli entries have been shown to satisfy RIP with high probability \cite{Richard_2008}.

\par In the recent literature on CS \cite{apple_2009, bryant_2014,  Cal_2010}, the deterministic construction of CS matrices has gained momentum. A. Amini et. al. \cite{amini_2011}, R. Devore \cite{Ronald_2007}, P. Indyk \cite{ind_2008} and  S. Li et. al. \cite{li_2012} have constructed deterministic binary sensing matrices using ideas from algebra and graph theory. In the present work, however, we attempt to construct deterministic binary sensing matrices using Euler Squares. The advantages of the proposed methodology are as follows:
\begin{itemize}
 \item Matrices of general row size (different from a prime and its square) and small density are constructed.
\item Simplicity in construction is achieved 
\end{itemize}
Here, density is defined as the ratio of number of nonzero entries to the total number of entries of the matrix. Sparse sensing matrices may contribute to fast processing with low computational complexity in compressed sensing \cite{gil_2010}.
\par In the recent past, Gaussian and $\pm 1$ Bernoulli random matrices (which are shown to satisfy RIP \cite{Richard_2008}) have been used to project data into lower dimension for the purpose of classification \cite{monica_2011}. There are, however, the following advantages of using deterministic binary matrices for the stated purpose:
\begin{itemize}
 
 \item Binary matrices being sparse and possessing $0, 1$ as elements provide multiplier-less and faster dimensionality reduction operation, which is not possible with Gaussian matrices
\item There is a nonzero probability of non-compliance with RIP for Gaussian 
  matrices
\end{itemize} 
\par The problem (\cite{CS_2013, Murala_2012}) of searching for similar images in a large image repository based on content is called Content Based Image Retrieval (CBIR). CBIR has several multimedia applications like text based retrieval (such as google search) and retrieval from medical databases. As images of large size typically involve more complexity, one needs to project them to lower dimensional spaces. It is demonstrated that the proposed binary sensing matrices project data into lower dimensional spaces in such a way that the reduced vectors are useful for the purpose of CBIR. Moreover, the proposed dimensionality reduction technique through binary sensing matrices allows for reconstruction, which is important for tele-medicine (\cite{ag_2010, sandy_2013}). The other dimensionality reduction techniques and even the sparsity seeking Dictionary based methods \cite{CS_2013} in general do not provide this advantage.

\par The paper is organized into several sections. In section \ref{sec:2},
basic CS theory and the equivalence between $l_{0}$~norm and $l_1$~norm 
problems are given. In section \ref{sec:3}, the deterministic construction 
procedure of binary sensing matrices using Euler Squares is presented.
Section \ref{sec:4} gives comparison with existing constructions. 
In section \ref{sec:5}, an application to content based image retrieval
is demonstrated.  Concluding remarks are given in section \ref{sec:7}.  

\section{Sparse recovery from linear measurements} \label{sec:2}
As stated already, CS refers to the problem of reconstruction of an unknown vector $u \in \mathbb{R}^{M}$ from
linear measurements $y = (\langle{u,\phi_{1}\rangle},\ldots,\langle{u,\phi_{m}\rangle})
\in \mathbb{R}^{m}$ with $\langle{u,\phi_{j}}\rangle$ being the inner-product between $u$ and $\phi_{j}$. The basic objective
in CS is to design a recovery procedure based on the sparsity assumption on $u$ when the number of measurements $m$ is
very small compared to $M$. Sparse representations have merit for various applications \cite{ CS_2013, Elad_2010, Sek_2014, Chand_2014, hari_2014, hari_2013} in areas such as
image/signal processing and numerical computation. \par A vector $u \in \mathbb{R}^{M}$ is $k-$sparse if it has at most $k$
nonzero coordinates. Let $\Vert{v} \Vert_0$ stand for $\left|\left\{i \mid v_{i}\neq 0\right\} \right|$. The problem of obtaining the sparse vector from its linear measurements may be posed as
\begin{displaymath}
\label{eqn:l_0}
P_0:\min_{v} \Vert{v} \Vert_0 \; \mbox{subject to} \quad \Phi v=y.
\end{displaymath}
This $l_0-$minimization problem is NP-hard \cite{Donoho_2006} in general. 
There are greedy algorithms for solving $P_0$ problem and orthogonal matching 
pursuit (OMP) is one of the popular methods \cite{Elad_2010}. 
Several researchers (for example, D. Donoho \cite{Donoho_2006} and E. Candes \cite{can_2008}) have established the conditions that ensure the recovery of solution to $P_0$ from :
\begin{displaymath}
P_1:\min_{v} \Vert{v} \Vert_1 \; \mbox{subject to} \quad \Phi v=y.
\end{displaymath}
Here $\Vert{v} \Vert_1$ denotes the $l_{1}$-norm of the vector $v \in \mathbb{R}^{M}$. Denote the solution to $P_{1}$ by $f_{\Phi}(y)$ and solution to $P_{0}$ by $u^{0}_{\Phi}(y) \in \mathbb{R}^{M}$. 
\subsection{On the equivalence between $P_{0}$ and $P_{1}$ problems} 
The coherence $\mu(\Phi)$ of a given matrix $\Phi$ is the largest absolute inner-product between different normalized  columns of $\Phi$. Denoting the $k$-th column in $\Phi$ by $\phi_k$, one defines $\mu(\Phi)$, the coherence, as
$\displaystyle \mu(\Phi)= \max_{1\leq\; i,j \leq\; M,\; i\neq j} \frac{| 
\langle \phi_i, \phi_j\rangle |}{\Vert \phi_i\Vert_{2} \Vert \phi_j \Vert_2}.$
\noindent For a matrix of size $m \times M$, $\mu$ satisfies: $\mu \geq \sqrt{\frac{M-m}{m(M-1)}}$, called the Welch bound. 
It is known \cite{Elad_2010} that for $\mu$-coherent matrices $\Phi$, one has
$\displaystyle u^{0}_{\Phi}(y) = f_{\Phi}(y) = u$,
provided $u$ is $k-$sparse with $k < \frac{1}{2}\; (1+\frac{1}{\mu}).$

\par The Restricted Isometry Property (RIP) plays an important role \cite{can_2008} in CS as it establishes the equivalence between the $P_0$ and $P_1$ problems. An $m \times M$ matrix $\Phi$ (which we refer to a CS matrix) is said to satisfy the Restricted Isometry Property(RIP) of order $k$ with constant $\delta_{k}$ if for all $k-$sparse vectors $x\in \mathbb{R}^{M}$, we have
\begin{equation}
\label{eqn:ri}
(1-\delta_{k}) \left\|x\right\|^{2}_{l_{2}} \leq \left\|\Phi x\right\|^{2}_{l_{2}} \leq (1+\delta_{k}) \left\|x\right\|^{2}_{l_{2}}.
\end{equation}
\noindent It is known \cite{can_2008}\cite{Donoho_2006} that the RIP along with some conditions on $\delta_k$ imply the equivalence between the $P_0$ and $P_1$ problems. 
\par One of the important problems in CS theory deals with constructing CS matrices that satisfy RIP for the largest possible range of $k$. It is known that the random constructions satisfy the RIP for the largest possible range on sparsity, which is $k = O(\frac{m}{\log(\frac{M}{k})})$ \cite{Richard_2008}.
\subsection{Existing deterministic constructions}\label{subsec:1}
To the best of our knowledge, designing good deterministic constructions of 
RIP matrices is still a very interesting problem. R. Devore \cite{Ronald_2007} has constructed deterministic binary sensing  matrices of size $p^2 \times p^{r+1}$ with coherence $\frac{r}{p}$, where $p$ is a prime power and $0 < r <p$. 
Later on, S. Li, F. Gao et. al. \cite{li_2012} have generalized the work in \cite{Ronald_2007}, constructing binary sensing matrices of size $|\mathcal{P}|q \times q^{\mathcal{L}(G)}$, where $q$ is any prime power and  $\mathcal{P}$ is the set of all rational points on algebraic curve $\mathcal{X}$ over finite field $\mathbb{F}_q$. P. Indyk \cite{ind_2008} has constructed binary sensing matrices using hash functions and extractor graphs with sizes $r2^{O(\log\log n)^{O(1)}} \times n$, where $r \ll n$. A. Amini et. al. \cite{amini_2011} have constructed binary sensing matrices using OOC codes. In all these constructions, CS matrices are given for specific sets of row sizes. 
J. Bourgain et. al. \cite{bourgain_2011} have constructed RIP matrices of order $k \geq m^{\frac{1}{2} + \epsilon}$, for some $\epsilon > 0$ and $M^{1 - \epsilon} \leq m \leq M$ using additive combinatorics. It is remarkable that this construction overcomes the natural barrier $k = O(m^{\frac{1}{2}})$ for those based on coherence. J. L. Nelson et. al \cite{Lidl_2011} have given lower and upper bounds on maximal possible column size in terms of fixed row size $m$ and coherence $\mu$.  They have given a class of deterministic CS matrices of size $p \times p^{r}$  with coherence $\frac{r-1}{\sqrt{p}}$, where $p$ is a prime number. R. Calderbank et.al. \cite{cal_2010, Cal_2010} have constructed CS matrices of size $2^{l} \times 2^{(r+2)l}$  with coherence $2^{r- \frac{l}{2}}$, using the Delsarte-Goethals codes, where $l$ is an odd number and $0 \leq r \leq \frac{l-1}{2}$ is a constant integer. M. A. Herman et.al. \cite{herman_2009} have constructed Gabor frame of size $p \times p^{2}$ with coherence $\frac{1}{\sqrt{p}}$, using Alltop sequence, where $p\geq 5$ is any prime number. The constructions, possessing sparsity $k=\sqrt{m}$ have been given by Calderbank et. al. in \cite{apple_2009}  based on tight frames such as $m \times m^{2}$ chirp matrices and Alltop Gabor frames \cite{baj_2010}, here $m$ is prime or prime power. The authors in \cite{Li_2014, Nam_2013} have constructed matrices which achieve the square root bottleneck asymptotically.

 In the present work, binary sensing matrices are constructed using 
Euler Squares. In particular, given any positive integer $m\neq p, p^2$ (for prime $p$), the procedure allows to construct binary CS matrix of size $m \times M$, where $M$ is $c(m\mu)^2$, with $\mu$ being the coherence parameter of the matrix (which is dependent on $m$ as explained in sub-sequent sections) and $c \in [1,2)$. As the proposed methodology does not involve any function evaluations and gives matrices of small density, this method involves less complexity as compared to some of the existing ideas stated above. The 
applicability of construction is demonstrated through an application to CBIR.
\section{Euler Squares for constructing CS matrices}\label{sec:3}

\subsection{Euler Squares}
An Euler Square of order $n$, degree $k$ and index $n,k$ is a square array of $n^2$ $k-$ads of numbers, $(a_{ij1},a_{ij2},\ldots,
a_{ijk})$, where $a_{ijr} \in \{0,1,2,\ldots,n-1\}; r = 1,2,\ldots,k; i,j = 1,2,\ldots,n; n > k; a_{ipr} \neq a_{iqr}$ and $a_{pjr} \neq a_{qjr}$ for $p \neq q$ and $(a_{ijr}+1)(a_{ijs}+1) \neq (a_{pqr}+1)(a_{pqs}+1)$ for $i \neq p$ and $j \neq q.$
\par Harris F. MacNeish \cite{euler_1922} has constructed Euler Squares 
for the following cases:\\
1. Index $p,p-1$, where $p$ is a prime number.\\
2. Index $p^r, p^{r}-1$, for $p$ prime.\\
3. Index $n,k$, where $n = 2^{r}p^{r_{1}}_{1}p^{r_{2}}_{2}\ldots, p^{r_{l}}_{l}$ for distinct odd primes $p_{1},p_{2},\ldots, p_l$. Here, $k + 1$ equals the least of the numbers $2^{r},p^{r_{1}}_{1},p^{r_{2}}_{2},\ldots ,p^{r_{l}}_{l}.$\\
For example, the Euler Square of index $3,2$ is as follows:

$$0,0 \ \ \ \  1,1 \ \ \ \ 2,2 $$
$$1,2 \ \ \ \  2,0 \ \ \ \ 0,1 $$
$$2,1 \ \ \ \  0,2 \ \ \ \ 1,0 $$
\begin{lemma}\cite{euler_1922}\label{lem:e}
Let $k' < k$. Then the existence of the Euler Square of index $n,k$ implies that the Euler Square of index $n,k'$ exists.
\end{lemma}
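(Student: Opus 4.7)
The plan is to prove this by a straightforward truncation argument: given an Euler Square of index $n,k$, I construct an Euler Square of index $n,k'$ by simply discarding the last $k-k'$ coordinates of every $k$-ad, retaining $(a_{ij1},a_{ij2},\ldots,a_{ijk'})$ at position $(i,j)$. Since $k' < k < n$, the row-size constraint $n > k'$ is automatic.

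Next I would verify the defining properties of an Euler Square of index $n,k'$ one by one against the list in the definition. Each $a_{ijr}$ with $r \le k'$ still lies in $\{0,1,\ldots,n-1\}$, and both of the Latin-square conditions $a_{ipr} \neq a_{iqr}$ and $a_{pjr} \neq a_{qjr}$ for $p \neq q$ hold in particular for every $r \le k'$ because they held for every $r \le k$. The orthogonality-type condition $(a_{ijr}+1)(a_{ijs}+1) \neq (a_{pqr}+1)(a_{pqs}+1)$ for $i \neq p$ and $j \neq q$ is likewise inherited: the original condition quantifies over all $r,s \le k$, so it certainly holds for all $r,s \le k'$.

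There is essentially no obstacle here; the result is what one might call a \emph{hereditary} property of Euler Squares under coordinate projection. The only thing worth flagging is that the statement of the lemma does not require the retained coordinates to be the first $k'$ — any subset of $k'$ coordinates of the $k$-ads would work, because each of the three nontrivial conditions is quantified coordinate-wise (or pairwise for the product condition) and therefore survives restriction to any sub-collection of indices. I would mention this in a brief remark but keep the written proof itself built on the canonical choice of the first $k'$ coordinates for notational cleanliness.
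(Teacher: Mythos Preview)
Your truncation argument is correct and is the standard proof: all three defining conditions of an Euler Square are quantified over coordinate indices (singly or in pairs), so restricting to any $k'$ of the $k$ coordinates preserves them, and $n>k'$ follows from $n>k>k'$. Note that the paper itself does not supply a proof of this lemma at all; it simply quotes the result from MacNeish \cite{euler_1922}, so there is no in-paper argument to compare against, but your approach is exactly the natural one and matches what is implicit in MacNeish's construction.
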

\subsection{Deterministic constructions via Euler Squares}\label{subsec:eu}
\noindent Using the concept of Euler Square of index $n,k$, deterministic 
binary CS matrices $\Phi$ that possess small coherence can be constructed. 
In particular, for $n \geq 3, k \geq 2,$ a CS matrix of size $nk \times n^2$, which may be treated as a block matrix consisting of $k$ number of $n \times n^2$ blocks can be obtained. Each column in $\Phi$ corresponds to a $k$-ad in the Euler Square. The matrix $\Phi$ is defined as follows:
For $1 \leq i \leq nk, 1 \leq j \leq n^2$,
\begin{equation}
\label{eqn:Euler Main}
\phi_{ij}= \left\{
\begin{array}{cc}
 1 & \mbox{if} \;(a_j)_{\left\lfloor\frac{i-1}{n}\right\rfloor + 1} \equiv i-1 (\mbox{mod} \;n) \\
0 & \mbox{otherwise},
\end{array}
\right \},
\end{equation}
where $(a_{j})$ is the $j^{th}$ $k-$ad, $(a_{j})_{l}$ is $l^{th}$ element in $j^{th}$ $k-$ad and $\left\lfloor x\right\rfloor$ is the largest integer not greater than  $x$. To summarize, for $j=1,2, \dots, n^{2},$ the $j^{th}$ column $\phi_{j}$ of $\Phi$ is generated as an $nk-$ binary vector from the $j^{th}$ $k$-ad $(a_{j})$ with 1 occurring at the positions $(l-1)n+((a_{j})_{l}+1)$ for $l= 1,2,\ldots,k$. 
\par There are exactly $k$ ones in each column of $\Phi$ and size of the matrix is $nk\times n^2$. For example, the $6 \times 9$ matrix given by the Euler Square of index $3,2$ is as follows:
\begin{center}
\[ \left( \begin{array}{ccccccccc}
1&0&0&0&0&1&0&1&0 \\
0&1&0&1&0&0&0&0&1 \\
0&0&1&0&1&0&1&0&0 \\
1&0&0&0&1&0&0&0&1 \\
0&1&0&0&0&1&1&0&0 \\
0&0&1&1&0&0&0&1&0 \\
\end{array} \right) \]
\end{center}

\noindent The following lemma finds the bound on the coherence of $\Phi$. \\
\begin{lemma}
\label{thm:lem}
The coherence of $\Phi$ is at most equal to $\frac{1}{k}$.
\end{lemma}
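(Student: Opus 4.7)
The plan is to compute $\mu(\Phi)$ directly from the definition by exploiting the three defining properties of an Euler Square. First, I observe that each column $\phi_j$ of $\Phi$ contains exactly $k$ ones (one per block of size $n$, placed at position $(l-1)n + (a_j)_l + 1$ inside block $l$), so $\|\phi_j\|_2 = \sqrt{k}$ for every $j$. Since $\Phi$ is binary, the inner product $\langle \phi_j, \phi_{j'}\rangle$ simply counts the number of rows where both columns have a $1$. Because the $l$-th block contributes a $1$ in column $j$ only at position $(l-1)n+(a_j)_l+1$, the block $l$ contributes to the inner product if and only if $(a_j)_l = (a_{j'})_l$. Therefore
\begin{equation*}
\langle \phi_j, \phi_{j'}\rangle = \bigl|\{\, l \in \{1,\ldots,k\} : (a_j)_l = (a_{j'})_l \,\}\bigr|.
\end{equation*}

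The crux of the argument is then to show that for any two distinct $k$-ads coming from the Euler Square, at most one coordinate can agree. I would index the $k$-ads by their positions $(i,j)$ in the square, so that the columns $\phi_j, \phi_{j'}$ of $\Phi$ correspond to $k$-ads at positions $(u,v)$ and $(u',v')$ with $(u,v)\neq (u',v')$. Suppose for contradiction that two distinct indices $r\neq s$ both satisfy $a_{uvr}=a_{u'v'r}$ and $a_{uvs}=a_{u'v's}$. I would then split into three cases: (i) if $u=u'$ and $v\neq v'$, the Latin-square property $a_{ipr}\neq a_{iqr}$ for $p\neq q$ gives a direct contradiction already from $r$ alone; (ii) if $u\neq u'$ and $v=v'$, the symmetric Latin-square property $a_{pjr}\neq a_{qjr}$ for $p\neq q$ again contradicts the equality at $r$; (iii) if $u\neq u'$ and $v\neq v'$, the orthogonality clause $(a_{ijr}+1)(a_{ijs}+1)\neq (a_{pqr}+1)(a_{pqs}+1)$ is violated, since the two equalities force these two products to coincide.

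Combining the three cases yields $\langle \phi_j,\phi_{j'}\rangle \leq 1$ for all $j\neq j'$. Dividing by $\|\phi_j\|_2\|\phi_{j'}\|_2 = k$ gives
\begin{equation*}
\mu(\Phi) \;=\; \max_{j\neq j'}\frac{|\langle \phi_j,\phi_{j'}\rangle|}{\|\phi_j\|_2\,\|\phi_{j'}\|_2} \;\leq\; \frac{1}{k},
\end{equation*}
which is the desired bound.

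I do not expect any real obstacle: once the correspondence between matrix columns and $k$-ads is written out, both the norm computation and the inner-product-as-coincidence-count are immediate, and the three Euler-Square axioms are precisely calibrated to rule out two coincidences. The only mildly delicate point is to verify that the orthogonality clause (iii) is being applied in the right direction; in particular, rewriting the hypothesized equalities as equalities of the two products $(a_{uvr}+1)(a_{uvs}+1)$ and $(a_{u'v'r}+1)(a_{u'v's}+1)$ must be done carefully so that the Euler-Square axiom applies verbatim to give the contradiction.
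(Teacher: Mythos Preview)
Your proposal is correct and follows essentially the same route as the paper: both argue by contradiction that two distinct columns cannot overlap in two positions, splitting into the three cases (same row/different column, different row/same column, different row/different column) and invoking, respectively, the two Latin-square conditions and the orthogonality condition of the Euler Square. Your write-up is slightly more explicit about the norm computation and the interpretation of the inner product as a coincidence count, but the argument is the same.
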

\begin{proof}
Suppose there exist two columns $\phi_c, \phi_d$ of $\Phi$ such that the overlap between them is at least 2. That is, if $u_c, u_d$ are the support vectors of $\phi_c, \phi_d$, then $\left|u_c \cap u_d\right| \geq 2$, which implies that there exist two $k-$ads $(a_c) = (a_{ij1},a_{ij2},\ldots,a_{ijk}), (a_d) = (a_{pl1},a_{pl2},\ldots,a_{plk})$ with $i = \left\lfloor\frac{c-1}{n}\right\rfloor +1, j = c - \left\lfloor\frac{c-1}{n}\right\rfloor n$ and $p = \left\lfloor\frac{d-1}{n}\right\rfloor +1, l = d - \left\lfloor\frac{d-1}{n}\right\rfloor n$ such that 
\begin{equation}\label{eqn:ci}
a_{ijr} = a_{plr} \text{~and~} a_{ijs} = a_{pls} \text{~for some~}1 \leq r,s \leq k.
\end{equation}
\textbf{Case 1:} Suppose $i = p$ and $j \neq l$, from (\ref{eqn:ci}),  $a_{pjr} = a_{plr}$ and $a_{ijs} = a_{ils}$ for $j \neq l$, which is a contradiction from the definition of Euler Square.\\
\textbf{Case 2:} Suppose $i \neq p$ and $j = l$. From (\ref{eqn:ci}), $a_{ilr} = a_{plr}$ and $a_{ijs} = a_{pjs}$ for $i \neq p$, which is again a contradiction.\\
\textbf{Case 3:} 
Suppose $i \neq p$ and $j \neq l$, from (3), $a_{ijr} = a_{plr}$ and $a_{ijs} = a_{pls}$. Since, $a_{ijr}, a_{pls} \in \{0,1,\ldots,n-1\}$ for all $1 \leq r,s \leq k$, we have $a_{ijr}+1 = a_{plr}+1$ and $a_{ijs}+1 = a_{pls}+1$, which implies that $(a_{ijr}+1)(a_{ijs}+1) = (a_{plr}+1)(a_{pls}+1)$ for $i \neq p$ and $j \neq l$, which is a contradiction from the definition of Euler Square.
\par Hence there are no two such $k-$ads satisfying (\ref{eqn:ci}). Therefore, the overlap between any two columns of $\Phi$ is at most 1. Since each column in $\Phi$ contains fixed number of $k$ ones, it follows that the coherence of $\Phi$, $\mu(\Phi)$, is at most equal to $\frac{1}{k}$.
\end{proof}

\noindent {\bf Remark 1:} The maximum possible column size of any binary matrix is $\frac{{m \choose r}}{{k \choose r}}$ \cite{amini_2011}, where $m$ is the row size, $k$ is the number of ones in each column and $r-1$ is the overlap between any two columns. Euler Square of index $n,k$ results in a 
binary sensing matrix of size $nk \times n^2$. In this matrix each column contains $k$ number of ones and coherence is at most $\frac{1}{k}$. The maximum possible column size is thus 
\begin{equation}\label{maxcol}
\frac{{nk \choose 2}}{{k \choose 2}} = \Theta(n^2)\footnote{$a= \Theta(b)$ implies that, there exist two constants $c_{1}, c_{2}$ such that $c_{1}b \leq a \leq c_{2}b$.}= \Theta((m\mu)^2).
\end{equation}
 For the Euler Square based matrix $m= nk, M= n^2$ and $\mu = \frac{1}{k}$, and hence $M= (m\mu)^2$, which is in the order of maximum possible column size. Hence the aspect ratio is also in the maximum possible order. 
The density for these matrices is $\frac{1}{n}$.

\noindent The following proposition \cite{bourgain_2011} relates the RIP constant $\delta_{k'}$ and $\mu.$
\begin{proposition}
\label{thm:pro}
Suppose that $\phi_{1},\ldots,\phi_{m}$ are the unit norm columns of the matrix $\Phi$ possessing the coherence $\mu$. Then $\Phi$ satisfies the RIP of order $k'$ with constant $\delta_{k'} = (k'-1)\mu$. 
\end{proposition}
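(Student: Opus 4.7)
The plan is to reduce the RIP inequality to an eigenvalue bound on a small Gram submatrix and then apply Gershgorin's disc theorem. Fix a $k'$-sparse vector $x \in \mathbb{R}^{M}$, and let $T \subseteq \{1,\ldots,M\}$ denote its support, so $|T| \leq k'$. Writing $\Phi_{T}$ for the submatrix of $\Phi$ consisting of the columns indexed by $T$, we have
\begin{equation*}
\|\Phi x\|_{2}^{2} = x_{T}^{\top} (\Phi_{T}^{\top}\Phi_{T}) x_{T},
\end{equation*}
so the desired two-sided inequality follows as soon as every eigenvalue of the $|T| \times |T|$ Gram matrix $G := \Phi_{T}^{\top}\Phi_{T}$ lies in the interval $[1-(k'-1)\mu,\, 1+(k'-1)\mu]$.

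Next I would exploit the structure of $G$. Because the columns of $\Phi$ are unit-norm, the diagonal entries of $G$ equal $1$; because the coherence is $\mu$, every off-diagonal entry satisfies $|G_{ij}| = |\langle \phi_{i},\phi_{j}\rangle| \leq \mu$. Gershgorin's disc theorem then places every eigenvalue of $G$ in the union of discs centered at $G_{ii}=1$ with radii $\sum_{j \neq i}|G_{ij}| \leq (|T|-1)\mu \leq (k'-1)\mu$. Hence every eigenvalue $\lambda$ of $G$ obeys $|\lambda - 1| \leq (k'-1)\mu$.

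Combining the two steps, for any $k'$-sparse $x$,
\begin{equation*}
\bigl(1-(k'-1)\mu\bigr)\|x\|_{2}^{2} \;\leq\; \|\Phi x\|_{2}^{2} \;\leq\; \bigl(1+(k'-1)\mu\bigr)\|x\|_{2}^{2},
\end{equation*}
which is exactly the RIP of order $k'$ with constant $\delta_{k'} = (k'-1)\mu$. There is no real obstacle here; the only thing to be slightly careful about is handling supports $T$ with $|T| < k'$, but the radius bound $(|T|-1)\mu$ is then even smaller, so the same constant $\delta_{k'}$ works uniformly over all $k'$-sparse vectors.
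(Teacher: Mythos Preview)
Your argument is correct and is precisely the standard Gershgorin-disc proof of this well-known fact. Note, however, that the paper does not actually supply its own proof of this proposition: it merely states the result and attributes it to \cite{bourgain_2011}. So there is no in-paper argument to compare against; your proposal simply fills in the omitted details, and it does so without gaps.
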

\noindent From lemma \ref{thm:lem} and Proposition \ref{thm:pro}, it follows that the matrix $\Phi$ so constructed satisfies RIP.
\begin{theorem}
\label{thm:main}
The matrix $\Phi_{0} = \frac{1}{\sqrt{k}}\Phi$ satisfies RIP with $\delta_{k'} = \frac{k'-1}{k}$ for any $k' < k + 1$. 
\end{theorem}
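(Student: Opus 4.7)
The plan is to reduce the claim directly to Proposition \ref{thm:pro} after verifying that $\Phi_0$ has unit-norm columns. First I would observe that by construction \eqref{eqn:Euler Main}, each column $\phi_j$ of $\Phi$ contains exactly $k$ entries equal to $1$ (one per block, corresponding to the $k$ components of the $k$-ad $(a_j)$), so $\|\phi_j\|_2 = \sqrt{k}$. Consequently, every column of $\Phi_0 = \tfrac{1}{\sqrt{k}}\Phi$ has unit $\ell_2$-norm, which is the hypothesis required by Proposition \ref{thm:pro}.

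Next I would note that coherence is invariant under positive column scaling, because the definition of $\mu$ already divides by the column norms. Hence $\mu(\Phi_0) = \mu(\Phi)$, and by Lemma \ref{thm:lem} we have $\mu(\Phi_0) \le \tfrac{1}{k}$.

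Finally, I would apply Proposition \ref{thm:pro} to the unit-norm-columned matrix $\Phi_0$ with $\mu \le \tfrac{1}{k}$, obtaining that $\Phi_0$ satisfies RIP of order $k'$ with constant $\delta_{k'} = (k'-1)\mu \le \tfrac{k'-1}{k}$, which is precisely the claim. The range $k' < k+1$ is forced by the requirement that $\delta_{k'} < 1$ for the RIP bound to be nontrivial, since for $k' = k+1$ one would get $\delta_{k'} \le 1$, losing the strict lower-isometry constant.

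There is essentially no obstacle here: once the two ingredients (unit-norm columns after rescaling, and the coherence bound from Lemma \ref{thm:lem}) are in place, Proposition \ref{thm:pro} delivers the result immediately. The only small bookkeeping step is confirming that each column has exactly $k$ ones, which follows directly from the indexing rule in \eqref{eqn:Euler Main}: for each $l \in \{1,\dots,k\}$ the congruence $(a_j)_l \equiv i-1 \pmod n$ is satisfied by exactly one row index $i$ inside the $l$-th block of $n$ rows.
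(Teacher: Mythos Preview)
Your argument is correct and matches the paper's own reasoning exactly: the paper simply states that the result follows from Lemma~\ref{thm:lem} and Proposition~\ref{thm:pro}, and you have filled in precisely those details (unit-norm columns after the $\tfrac{1}{\sqrt{k}}$ normalization, invariance of coherence under scaling, and then a direct application of the proposition). There is nothing to add.
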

\noindent {\bf Remark 2:}
$m = nk, M = n^2$ and $k' < k + 1$ gives $k = \frac{m}{n}, n = \sqrt{M}$ which implies that $k = \frac{m}{\sqrt{M}}$. Consequently $k'(m, M) < \frac{m}{\sqrt{M}} + 1.$ 
 
\par The previous procedure can be generalized for any row size $m$ different 
from a prime and its square. The following theorem summarizes the main result. 
\begin{theorem}
\label{thm:main1}
Suppose $m$ is any positive integer different from $p, p^2$ for a prime $p$. Then there exists a binary sensing matrix of size $m \times M$ with coherence $\mu = \frac{\sqrt{M}}{m}$.
\end{theorem}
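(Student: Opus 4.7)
The plan is to reduce Theorem \ref{thm:main1} to the construction from Subsection \ref{subsec:eu} by producing a factorization $m = nk$ with $n \geq 3$, $k \geq 2$, $k \leq n-1$, for which an Euler Square of index $(n,k)$ is available. Once such an $(n,k)$ is in hand, Lemma \ref{thm:lem} gives an $m \times n^2$ binary matrix with coherence at most $\frac{1}{k}$, and writing $M = n^2$ one reads off $\frac{1}{k} = \frac{n}{nk} = \frac{\sqrt{M}}{m}$. Combining cases 2 and 3 of MacNeish's classification with Lemma \ref{lem:e}, an Euler Square of index $(n,k)$ exists whenever $k + 1 \leq f(n)$, where $f(n)$ denotes the smallest term in the coprime prime-power decomposition of $n$. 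So the theorem reduces to the arithmetic task of finding $n, k$ with $nk = m$, $n \geq 3$, $k \geq 2$, and $k + 1 \leq f(n)$.

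Since $m$ is neither a prime nor the square of a prime, exactly one of two cases occurs. \textbf{Case A:} $m = p^a$ for a single prime $p$ with $a \geq 3$. I set $n = p^{a-1}$ and $k = p$, so that $nk = m$, $n \geq p^2 \geq 4$, $k = p \geq 2$, $k \leq n-1$ (since $p+1 \leq p^2 \leq p^{a-1}$), and $f(n) = p^{a-1} \geq p + 1 = k + 1$. \textbf{Case B:} $m$ has at least two distinct prime divisors. Writing the coprime prime-power decomposition as $m = q_1 q_2 \cdots q_s$ with $q_1 < q_2 \leq \cdots \leq q_s$ (the first inequality is strict because distinct primes never produce equal prime powers), I set $n = m/q_1 = q_2 \cdots q_s$ and $k = q_1$. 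Then $f(n) = q_2 \geq q_1 + 1 = k + 1$, $n \geq q_2 \geq 3$, $k = q_1 \geq 2$, and $k = q_1 \leq q_2 - 1 \leq n - 1$. Feeding $(n, k)$ into the recipe of Subsection \ref{subsec:eu} delivers the desired matrix with coherence $\frac{\sqrt{M}}{m}$, $M = n^2$.

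The only non-routine point is in Case B: $n$ need not itself be a prime power. For example, when $m = 30 = 2 \cdot 3 \cdot 5$, no prime-power divisor exceeds $\sqrt{m}$, but the composite choice $n = 15$, $k = 2$ still satisfies $f(n) = 3 \geq k + 1$ because the smallest prime power in the decomposition of $15$ is $3$. Once this is recognised, the case split is exhaustive and every inequality in each case is immediate, so the coherence statement drops out from $1/k = n/m = \sqrt{M}/m$.
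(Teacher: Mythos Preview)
Your proof is correct and follows essentially the same route as the paper. Both arguments split into the same two cases (a prime power $p^a$ with $a\ge 3$ versus an integer with at least two distinct prime divisors), and in each case both you and the paper take $n=m/k$ with $k$ equal to the smallest prime-power factor (respectively $k=p$ in Case~A/Case~1 and $k=q_1$ in Case~B/Case~2), then invoke MacNeish's existence result together with Lemma~\ref{lem:e} to obtain the Euler Square of index $(n,k)$. Your packaging via the uniform criterion ``$k+1\le f(n)$'' is slightly cleaner than the paper's phrasing, but the content is the same.
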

\begin{proof}
\textbf{Case-1:} If $m = p^{i}, i>2$ then $m$ can be written as $m = p^{i-1}p$. Since $i > 2$, we have $(p^{i-1}-1) > p$. From  \cite{euler_1922}, it is known that the Euler Square of index $p^{i-1}, p^{i-1}-1$ exists. Since $(p^{i-1}-1) > p$, from lemma~\ref{lem:e} Euler Square of index $p^{i-1}, p$ exists. If we construct the binary matrix using this Euler Square, then the row size of it becomes $p^{i}$. \\
\textbf{Case-2:} Let $m$ be any integer such that $m \neq p^{i}$ for $i>2$. From the fundamental theorem of arithmetic $m$ is factorized as $m = 2^{r}p^{r_{1}}_{1}p^{r_{2}}_{2}\ldots p^{r_{l}}_{l}$. Let $k'= \min\{2^{r},p^{r_{1}}_{1},p^{r_{2}}_{2},\ldots ,p^{r_{l}}_{l}\}$ and $k = k'-1$. With out loss of generality assume that $k' = 2^r$ (The following arguments hold true even if  $k' = p^{r_{s}}_{s}$ for some $s \in \{1,2,\ldots, l\}$). From the construction of MacNeish \cite{euler_1922}, it is known that the Euler Square of index $m, k$ exists. Let $m_{1} = p^{r_{1}}_{1}p^{r_{2}}_{2}\ldots p^{r_{l}}_{l}$ and without loss of generality assume that $p^{r_1}_{1}=\min\{p^{r_{1}}_{1},p^{r_{2}}_{2},\ldots ,p^{r_{l}}_{l}\}$. Since $2^{r} \leq (p^{r_1}_{1}-1)$, Euler Square of index $m_{1}, (p^{r_1}_{1}-1)$ exists, which in view of lemma~\ref{lem:e} implies that the  Euler Square of index $m_{1}, 2^{r}$ exists. The binary CS matrix constructed through this Euler Square has a row size of $m_{1}2^{r}$, which is 
$m$.
\end{proof}
\noindent {\bf Remark 3:} In both the cases above $M=(m\mu)^2$, which is in the order of maximum possible column size as stated in Remark-1   and they allow for successful signal recovery of sparsity $k'(m, M) < \frac{m}{\sqrt{M}} + 1$.\\
\par The afore-described construction methodology results in a binary CS matrix of size $nk \times n^2$ from an Euler Square of index $n,k$.
 As explained in Remark 1, the maximum possible column size for our construction in terms of row size $nk$ and coherence $\frac{1}{k}$ is $\frac{{nk \choose 2}}{{k \choose 2}} = \Theta(n^2)$. Since, $n^{2} \leq \frac{{nk \choose 2}}{{k \choose 2}} < 2n^{2}$, we have $c_{1} = 1$ and $c_{2} = 2$.  
In Theorem 5, we give the construction for $c =1$ and in the next section, when $n$ is of the form $2^{r}p^{r_{1}}_{1}p^{r_{2}}_{2}\ldots p^{r_{l}}_{l}$ (that is, $n$ is not a power of single prime) we try to increase the column size of above construction to $c(n^{2})$, where $c \in (1, 2)$, leaving the row size and the coherence intact.
 The procedure discussed above uses smallest factor, while in increasing the column size, other factors are also used.
\subsection{Column extension of CS matrices constructed from Euler Squares}\label{subsec:col}
\par As before, let $n = 2^{r}p^{r_{1}}_{1}p^{r_{2}}_{2}\ldots, p^{r_{l}}_{l}$, $k'$= min$\{2^{r},p^{r_{1}}_{1},p^{r_{2}}_{2},\ldots ,p^{r_{l}}_{l}\}$ and $k = k'-1$.  Suppose $\Phi^{(0)}$ is the matrix of size $nk \times n^2$ that we obtain from the Euler Square of index $n,k$. Without loss  of generality, assume that $2^{r} < p^{r_{1}}_{1} < p^{r_{2}}_{2} < \ldots < p^{r_{l}}_{l}$, let $k_1=$ max$\{2^{r},p^{r_{1}}_{1},p^{r_{2}}_{2},\ldots ,p^{r_{l}}_{l}\}= p^{r_{l}}_{l}$. Clearly, $n_{1}=\frac{n}{k_1}=2^{r}p^{r_{1}}_{1}p^{r_{2}}_{2}\ldots p^{r_{l-1}}_{l-1}$. Since the Euler Square of index $n_{1},k$ exists and using it we construct $n_{1}k \times n_1^2$ matrix, say $\Phi^{(1)}$, with coherence at most $\frac{1}{k}$. Now using these two matrices $\Phi^{(0)}$ and $\Phi^{(1)}$, a new matrix $\Psi^{(1)}$ is obtained as detailed below:
\par The matrix $\Phi^{(0)}$ may be viewed as a block matrix possessing $k$ blocks with each block being of size $n \times n^2$. Since $k_{1} > k$ and $n_{1}k=\frac{n}{k_1}k$, we have $n_{1}k<n.$  The suitably zero-padded $\Phi^{(1)}$ may be added as an additional column block to $\Phi^{(0)}$ to generate a matrix of size $nk \times (n^2+n_1^2)$. 
One may obtain $k$ number of zero-padded column blocks by placing $\Phi^{(1)}$ in the corresponding row-block-locations of $\Phi^{(0)}$. Using all these column blocks, we obtain a new binary matrix of size $nk \times (k n_1^2)$, say $\Psi^{(1)}$.
From Lemma \ref{thm:lem}, the inner-product between any two columns in $\Phi^{(0)}$ and $\Psi^{(1)}$ is at most 1. Further the inner-product between any two columns one from $\Phi^{(0)}$ and other from $\Psi^{(1)}$ is at most 1, this is because the $k$ nonzero entries of a column from $\Psi^{(1)}$ fall only in one  of the blocks of $\Phi^{(0)}$ which has only one nonzero entry.
 \par The elements of the matrix $\Psi^{(1)}$ may be generated as follows: 
for $t=1,2, \ldots, k; \; (t-1)n^{2}_{1}+1 \leq j \leq tn^{2}_{1}, \; 1\leq i\leq (t-1)n, \; \psi^{(1)}_{ij}= 0$, also for $(t-1)n^{2}_{1} + 1 \leq j \leq tn^{2}_{1}, (t-1)n +1 \leq i \leq (t-1)n + n_{1}k$,
\begin{equation}
\psi^{(1)}_{ij}= \begin{cases}
 1 & \mbox{if} \;(a_j)_{\left\lfloor\frac{i-(1+(t-1)n)}{n_{1}}\right\rfloor + 1} 
 \equiv i-(1+(t-1)n)\\
 & ~~~~~~~~~~~~~~~~~~~~~~~~~~~~~~(\mbox{mod} \;n_{1}) \\
0 & \mbox{otherwise},
\end{cases}\end{equation}
and for $(t-1)n^{2}_{1} + 1 \leq j \leq tn^{2}_{1}, (t-1)n + n_{1}k +1 \leq i \leq nk, \;\; \psi^{(1)}_{ij}= 0$,
where $(a_{j})$ is the $j^{th}$ $k-$ad in the Euler Square of index $n_{1},k$.
\par Similarly, using $k_2=$ max$\{2^{r},p^{r_{1}}_{1},p^{r_{2}}_{2},\ldots ,p^{r_{l-1}}_{l-1}\}= p^{r_{l-1}}_{l-1}$, $n_2 = \frac{n_1}{k_2}$, a CS matrix of size $n_2 k \times n_2^2$  (say $\Phi^{(2)}$) is obtained.  The matrix $\Phi^{(1)}$ may be viewed as a block matrix possessing $k$ blocks with each block being of size $n_{1} \times n_{1}^2$. As shown in Fig. 1, the matrix $\Psi^{(1)}$ contains $k$ such $\Phi^{(1)}$ matrices, therefore there are $k^{2}$ blocks each of size $n_{1} \times n_{1}^2$ in $\Psi^{(1)}$. Since $k_{1} > k_{2} > k$ and $n_{2}k=\frac{n_{1}}{k_2}k$, we have $n_{2}k<n_{1}.$  The suitably zero-padded $\Phi^{(2)}$ may be added in these $k^{2}$ blocks as additional column blocks to $\Psi^{(1)}$ as shown in Fig. 1 and generate a new matrix, say $\Psi^{(2)}$, of size $nk \times k^2 n^{2}_2$. Continuing this way till Euler Square of index $n_l, k$, a matrix, 
say $\Psi^{(l)}$, of size $nk \times k^l n_{l}^2$ is obtained at the $l^{th}$ 
stage. Finally, concatenating $\Phi^{(0)}, \Psi^{(1)}, \ldots, \Psi^{(l)}$, 
results in a matrix with coherence at most $\frac{1}{k}$ and column size 
$(n^{2} + n^{2}_{1}k + n^{2}_{2}k^{2} + n^{2}_{3}k^{3}+ \ldots +n^{2}_{l}k^{l}) \geq n^{2}  \left[\frac{1-(\frac{k}{k^{2}_{1}})^{r+1}}{1-(\frac{k}{k^{2}_1})}\right]$. This methodology is explained diagrammatically in Figure \ref{fig:block} for $l=2$, that is, $n = 2^{r} p_1^{r_1} p_2^{r_2}$.  

\begin{figure}[!ht]
\begin{center}
\includegraphics[width=9cm, height=5cm]{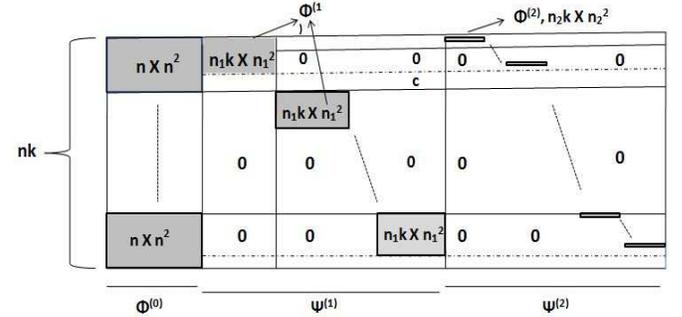}
\caption{Block diagram depicting the extension of the column size of CS matrix constructed from the Euler Square. The way the blocks, obtained from several Euler Squares, are arranged ensures that the column size gets enlarged without affecting the coherence of initial matrix.} 
\label{fig:block}
\end{center}
\end{figure}   

The column extended matrix has RIP compliance as summarized by the following theorem.
\begin{theorem}
\label{thm:ma}
The matrix $[ \Phi^{(0)} \; \Psi^{(1)} \; \ldots \; \Psi^{(l)} ]$ satisfies the RIP with $\delta_{k'} = \frac{k'-1}{k}$ for any $k' < k + 1$. 
\end{theorem}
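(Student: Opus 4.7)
My strategy is to reduce the claim to Proposition~\ref{thm:pro} by showing that the concatenated matrix has coherence at most $\frac{1}{k}$ once its columns are normalized. If this is done, Proposition~\ref{thm:pro} with $\mu = \frac{1}{k}$ yields $\delta_{k'} \leq \frac{k'-1}{k}$ for all $k' < k+1$, which is exactly the stated RIP constant, so the entire theorem reduces to a single coherence computation just as in Theorem~\ref{thm:main}.

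Normalization is immediate from the construction: by the indicator rule~(\ref{eqn:Euler Main}) and its recursive analogue used to define each $\Psi^{(s)}$, every column of every block contains exactly $k$ ones and zeros elsewhere, so $\|\phi\|_2 = \sqrt{k}$ for every column $\phi$. After global rescaling by $\frac{1}{\sqrt{k}}$, any inner product between distinct columns is divided by $k$, so it suffices to prove $\langle \phi, \psi\rangle \leq 1$ for every pair of distinct unscaled columns.

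I would verify this by case analysis on the blocks $\phi$ and $\psi$ belong to, writing $\Psi^{(0)} := \Phi^{(0)}$ for uniformity. If both columns come from the same underlying copy of some $\Phi^{(s)}$, the bound is exactly Lemma~\ref{thm:lem} applied to the Euler Square of index $n_s, k$. If both come from the same $\Psi^{(s)}$ but from different padded copies of $\Phi^{(s)}$, the supports lie in disjoint row bands by the padding pattern and the inner product is $0$. The only substantive case is when $\phi$ comes from $\Psi^{(s)}$ and $\psi$ from $\Psi^{(t)}$ with $s < t$; the excerpt already checks the prototype $s = 0,\ t = 1$, and the general case follows the same mechanism.

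The structural invariant I would formalize, by induction on $t - s$, is that every column of $\Psi^{(t)}$ has all $k$ of its nonzero entries confined to a single row slab of height $n_s$, and this slab coincides with one of the $k$ row-blocks of height $n_s$ making up the relevant copy of $\Phi^{(s)}$ inside $\Psi^{(s)}$. Since each column of $\Phi^{(s)}$ places exactly one $1$ in each of its row-blocks, $\phi$ can overlap $\psi$ in at most a single position, giving $\langle \phi, \psi\rangle \leq 1$; Figure~\ref{fig:block} depicts exactly this nesting. The principal obstacle is purely notational, namely tracking the nested row-blocks through all $l$ stages of the recursive padding; once the invariant above is stated and propagated, each individual inner-product bound is a one-line verification and no further analytical work is needed.
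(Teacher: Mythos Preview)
Your proposal is correct and follows essentially the same route as the paper: the paper states the theorem without a separate proof, having already argued in the construction of Section~\ref{subsec:col} that any two columns of the concatenated matrix overlap in at most one position (using Lemma~\ref{thm:lem} within a block and the ``$k$ nonzero entries fall in one block'' observation across blocks), so that the coherence is at most $\frac{1}{k}$ and Proposition~\ref{thm:pro} applies. Your case analysis and inductive invariant simply make explicit the recursive nesting that the paper leaves to Figure~\ref{fig:block} and the phrase ``Continuing this way''.
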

\noindent {\bf Remark 4:} Combining the results in Sections \ref{subsec:eu} and \ref{subsec:col}, it can be concluded that the constructed matrix attains 
column size $m \times c(m\mu)^2$ for some $c \in [1, 2).$ In particular, when $m$ is power of a single prime (that is, $m=p^l, l>2$), $c=1$, and in other cases $c$ can be made to lie in $(1,2)$, as justified in Section \ref{subsec:col}.\\

\section{Comparison with existing constructions}\label{sec:4}
\noindent As discussed already in the Section \ref{subsec:1}, existing 
methods provide binary CS matrices for specific set of integers. 
The present work, however, provides a procedure for constructing binary matrices for a large class of row sizes. From  Remark 1, the maximum possible column size is between $n^2$ and $2n^2$. Since $k \geq 2$, it follows that $(nk)^{2} \geq 4n^2$ and hence $M < m^2$. In the constructions provided by \cite{ amini_2011, bourgain_2011,  Nam_2013, Yu_2013}, $M < m^2$ holds, albeit for specific set of values of $m$. While in the construction of \cite{Ronald_2007}, $M > m^2$ holds. It is known \cite{mix_2011} that the Welch bound is not sharp in this case. Consequently the gain obtained in terms of increased column size does affect the coherence, which inturn restricts the sparsity of the solution to be recovered.  
 In addition to providing general row size, the present construction is simple in the sense that it does not involve function evaluations like in \cite{Ronald_2007} and gives matrices with small density, which support algorithms with low computational complexity. For example, to generate an Euler square matrix of size $p,p-1$, it is only required to store two cyclic permutations of length $p$ and $p-1$ respectively and for index $p^{i},p^{i-1}$,  it is sufficient to store at most $\frac{p^{i}}{2}$ permutations \cite{euler_1922}.
\par To the best of our knowledge, the constructions possessing sparsity $k '= \sqrt{m}$ (that is, coherence $\mu = \frac{1}{\sqrt{m}}$) exist for non-binary matrices with  row size  $m$ being prime or prime power \cite{apple_2009, baj_2010, cal_2010, Cal_2010}. While in our construction methodology, if we use the Euler Squares of index $p, p-j$ for $j=1,2$, then the size of the matrix $\Phi$ is $p(p-j) \times p^{2}$ and coherence $\mu$ is $\frac{1}{p-j}$. Since the row size $m$ of $\Phi$ is $ p(p-j)$, we have  $\mu = \frac{1}{\left\lfloor\sqrt{m}\right\rfloor}$. Hence, this matrix provides guarantees for signals of sparsity up to $ k' = \left\lfloor\sqrt{m}\right\rfloor$. This is true for any Euler Square of index $p^{i}, p^{i}-j$, where $p$ is a prime number, $i \geq 1,$ and $j=1,2.$  Hence we construct the binary matrices that provide guarantees for signals of sparsity up to $k' = \left\lfloor\sqrt{m}\right\rfloor$ for different class of row sizes such as $p^{i}(p^{i}-j)$, where $p$ is prime $i \geq 1,$ and $j=1,2.$
\par For an arbitrary binary matrix, if the inner-product between any two columns is at most 1, every column contains fixed ($\sqrt{m}$) number of ones (that is coherence is at most $\frac{1}{\sqrt{m}}$) and row size is $m$, then the maximum possible column size $M= O(m)$ as mentioned earlier. 
 
\par Using Euler squares, we have constructed  matrices that provide guarantees for signals of sparsity up to $ k' = \left\lfloor\sqrt{m}\right\rfloor$ with $M = O(m)$. 
Through an extension of our construction methodology, it is possible to  generate ternary matrices,  for which $k' = \sqrt{m}$ holds (that is, coherence is in the order of $\sqrt{\frac{M-m}{m(M-1)}} $ )  with column size $M = O( m^{\frac{3}{2}})$, which is explained in the following:
\par Suppose $\Phi$ is a matrix constructed from the Euler square of index $p^{i}, p^{i} - j$ for $i \geq 1, j=$ 1 or 2 and $H$ a Hadamard matrix of size $(p^{i} - j)$ or $(p^{i} - j) + 1$.  To get a ternary CS matrix (i.e. matrix containing $0,\pm 1$ as elements) $\Psi$,  for each column of $\Phi$, we replace each of
its $1$-valued entries with a distinct row of $H$ and $0$-valued entries with a zero row of same size. The  size of the matrix $\Psi$ becomes $p^{i}(p^{i} - j) \times p^{2i}(p^{i} - j)$. In this construction, it is very easy to check that the coherence of the matrix $\Psi$ is $\frac{1}{(p^{i} - j)}$, which is in the order of $\sqrt{\frac{M-m}{m(M-1)}} $ (Welch bound) with column size $M = O( m^{\frac{3}{2}})$.

With a view towards comparing the numerical performance against the standard 
Gaussian (with entries drawn from $\mathcal{N}(0, \frac{1}{m}))$ and Bernoulli (with entries $\phi_{ij} = \pm \frac{1}{\sqrt{m}}$, each with probability $\frac{1}{2}$) random matrices, binary matrices of size $55 \times 121$ and 
$230 \times 529$ are generated using the Euler Squares of indices 
$(11,5)$ and $(23,10)$. The OMP algorithm is used 
to solve the $l_0$ minimization problem for a signal $x$. Let $\tilde x$ denote the recovered solution. From the reconstruction, the Signal-to-noise ratio 
(SNR) of $x$ is computed using
\begin{displaymath}
SNR(x) = 10 . \log_{10} \biggl( \frac{\|x \|_2}{\|x - \tilde x \|_2} \biggl) dB.
\end{displaymath}
For recovery at each sparsity level, 1000 input $k-$sparse signals $x$ (the nonzero indices are chosen uniformly randomly and the nonzero entries  are drawn with $\sim \mathcal{N}(0, 1)$) are considered.
The recovery is considered to be good if $SNR(x) \geq 100dB$. Simulation 
results on these matrices indicate (shown in Figure \ref{fig:comp_matrix}) that 
the Euler Square matrix gives better performance than the Gaussian and Bernoulli random matrices for 
certain higher sparsity levels. While for other sparsity levels, these matrices give the same recovery performance. It is observed 
that the CS matrices constructed from the Euler Squares of indices $p,s$ for a
prime $p$ and $s= \lfloor{\frac{p}{2}} \rfloor $ to $p-1$, give relatively superior performance than the other choices of $s$ against their Gaussian and Bernoulli counterparts.
\begin{figure}
    \centering
               \includegraphics[width=10cm]{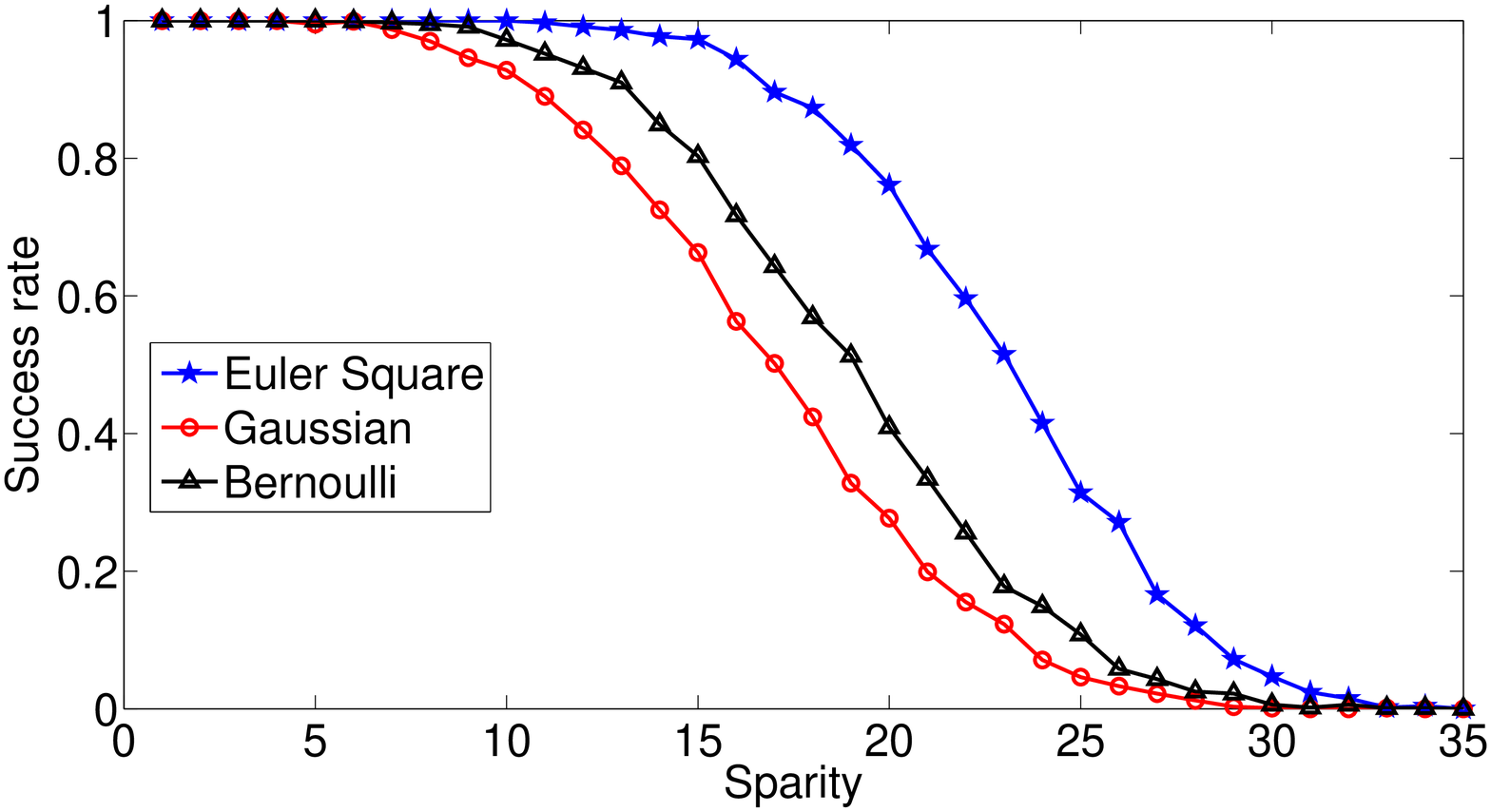}
            
       
         \includegraphics[width=10cm]{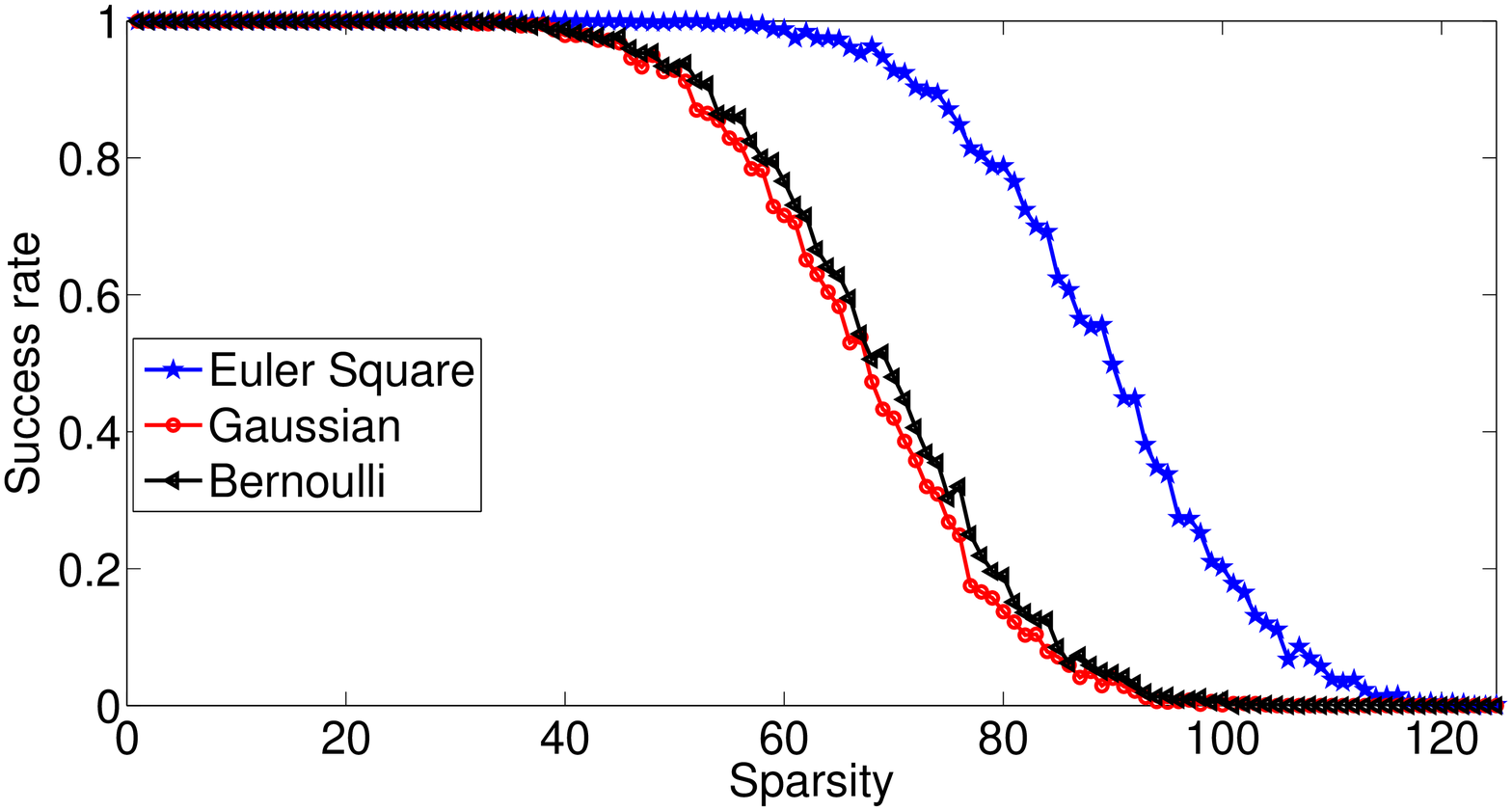}
        \caption{Comparison of the reconstruction performances of Euler Square based, Bernoulli random and the Gaussian random matrices when the matrices are of size (a) $ 55 \times 121$ (top plot) and (b) $230 \times 529$ (bottom plot). These plots indicate that the Euler Square based matrix shows superior performance for some sparsity levels, while for other levels all matrices result in the same performance. The $x$ and $y$ axes in both plots refer respectively to the sparsity level and the success rate (in \% terms). For matrices in (a) and (b), the coherences 0.5226, 0.34 (for Gaussian matrices) 0.52, 0.28 (for Bernoulli matrices) and  0.2, 0.1 (for Euler based matrices). The values of coherence of Euler matrices are equal to their theoretical bound $\mu=\frac{\sqrt{M}}{m}$.}
\label{fig:comp_matrix}
\end{figure}

\subsection{Phase transition}
The phase transition diagrams depict the largest $k$ (with fixed $m$ and $M$) for faithfully recovering $k-$sparse vectors via $l_1$-norm minimization. In Figure~\ref{fig:phase}, the region above the curve is the one in which successful reconstruction is not possible, while the region below consists of points at which successful recovery is possible. Again, 
recovery is considered to be successful if SNR is greater than $100dB$. \
Given a set of points $\delta = \frac{m}{M}$, we have generated phase transition by finding the largest sparsity $k$ such that the successful recovery of 90 percent is achieved by considering the average recovery over 1000 iterations at each point  by using the matrix which gives that particular point. Figure~\ref{fig:phase} provides phase transition for Euler, Gaussian and Bernoulli random matrices for different values of $\delta= \frac{m}{M}$ which were given by the matrices of size $m \times M$, where $m=22, 33, 44, 55, 66, 77, 88, 99, 110$ and M=121. It may be inferred from Figure~\ref{fig:phase} that the Euler Square based recovery is very competitive.

\begin{figure}
\centering
\includegraphics[width=10cm]{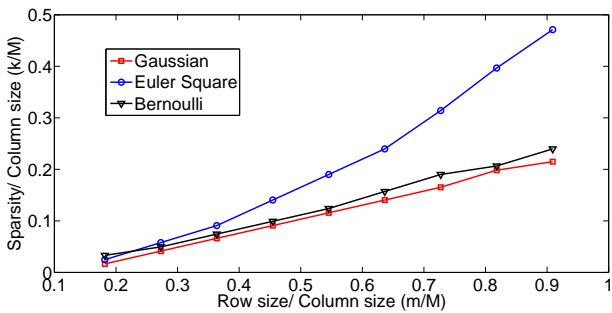}
\caption{Comparison of the reconstruction performances of Euler Square based, the Gaussian random and Bernoulli random matrices  through phase transition. This plot indicates that the Euler Square based matrices provide wider recovery region than their Gaussian and Bernoulli counterparts. The x and y axes in the plot represent $(m/M)$ and $(k/M)$  respectively. This plot is generated for $M=121,\; m=22,33,44,55,66,77,88,99$ and $110.$}
\label{fig:phase}
\end{figure}

\subsection{Reconstruction of images}
The efficacy of Euler Square based matrix is demonstrated using 
image reconstruction from lower dimensional patches, where the patches are 
generated via the sensing matrices (as explained in more detail in the next 
section and Figure~\ref{fig:prop}). Since the CS theory relies on the concept of sparsity, the key property needed of the image is enough sparsity in its original or some transform domain.
\par  As compressed sensing allows for the reconstruction of sparse vector $x$ from its linear measurements $\Phi x$, we demonstrate reconstruction performance via a medical image. The reconstructions shown in Figure~\ref{fig:1} correspond to different down-sampling factors, viz 2.6 and 1.6. The associated reconstruction errors in term of SNR are shown in Table~\ref{fig:Tab1}. Here by down-sampling factor, we mean the ratio of original patch size to reduced patch size which is same as $\frac{M}{m}$ (where, $m \times M$ is  the size of the matrix used for projecting data to the lower dimensional space). From Figure~\ref{fig:1} and Table~\ref{fig:Tab1}, it may be concluded that the Euler Square based matrices provide better reconstruction performance.

\begin{figure}
    \centering
               \includegraphics[height=3.5cm,width=7cm]{{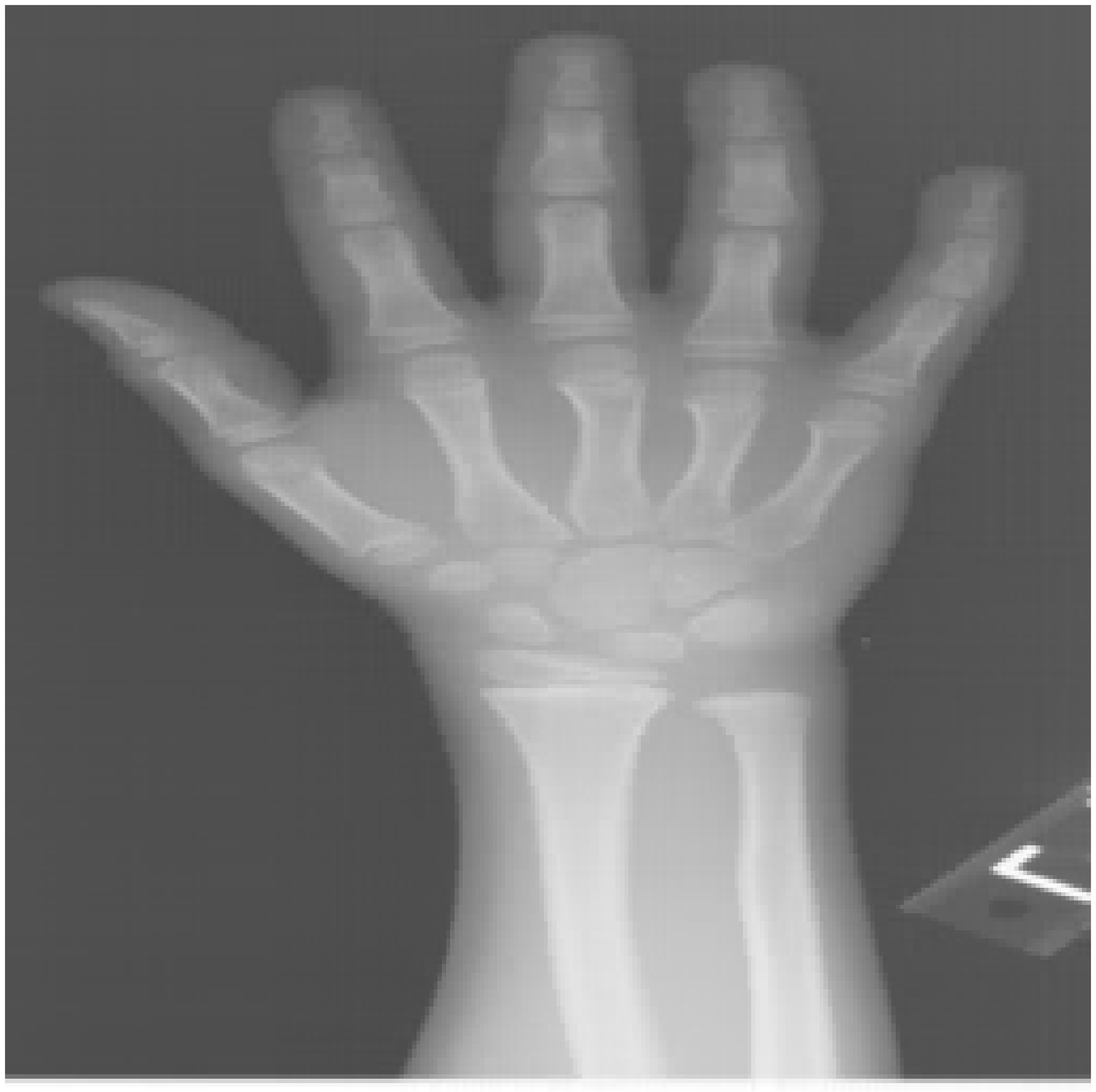}}
							
						\caption{Orginal image}
						\label{fig:a}
            
    \hfill
       
         \includegraphics[height=3.5cm]{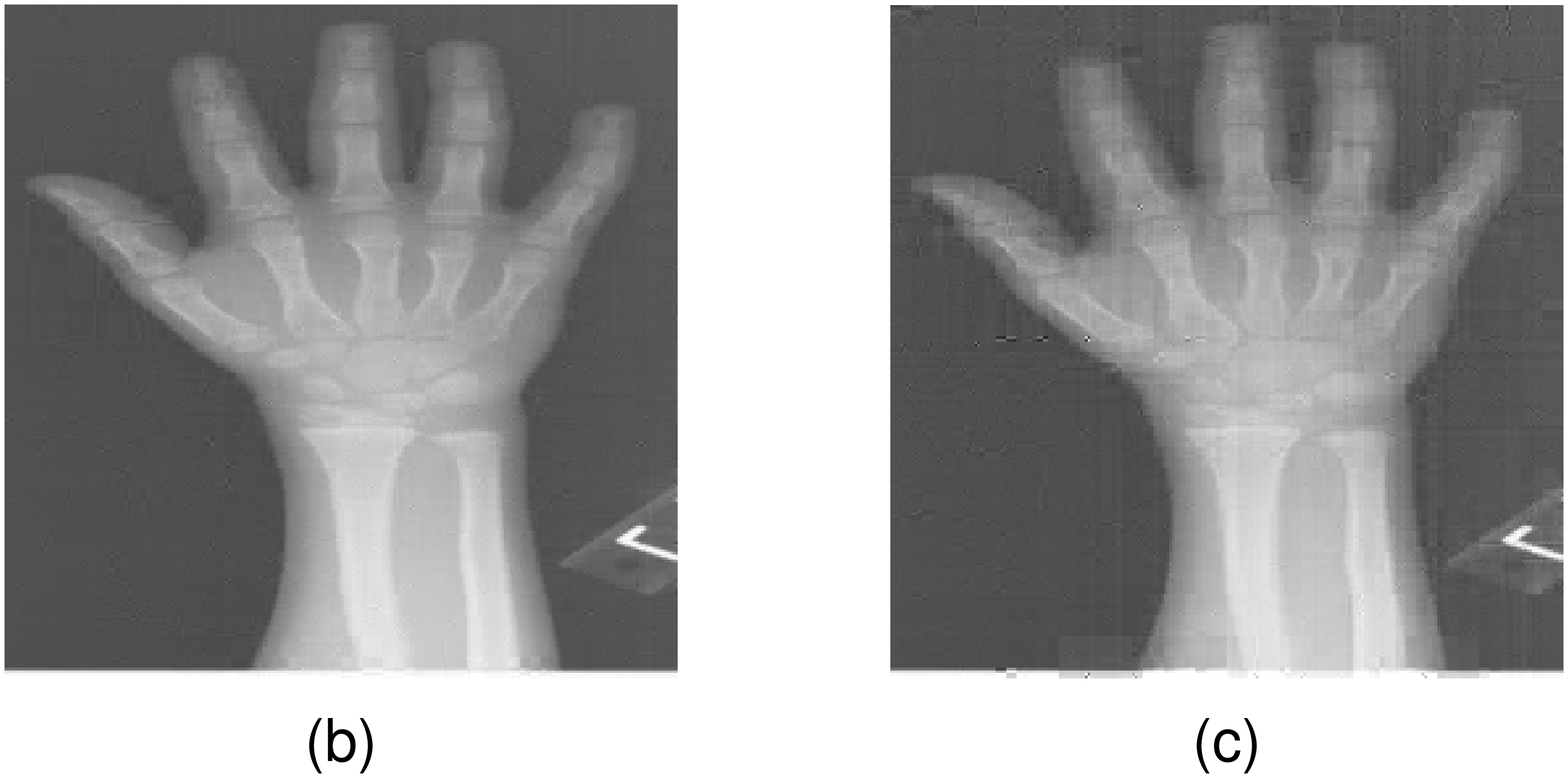}
				\hfill
				\includegraphics[height=3.5cm]{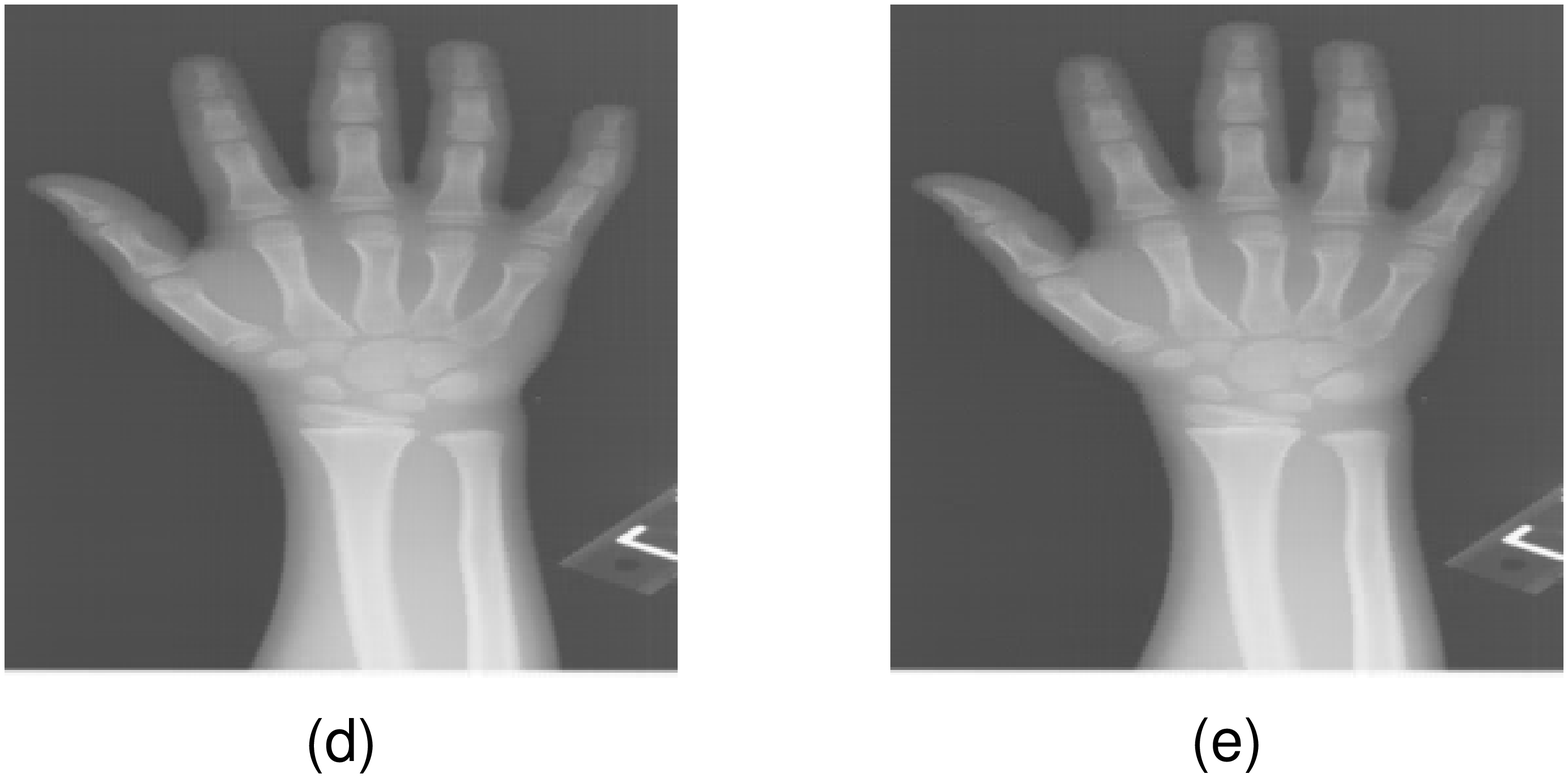}
 \caption{For the original image in Figure \ref{fig:a}, the images in (b) and (d) are those reconstructed via the Euler Square based matrices with down-sampling factors 2.6 and 1.6 respectively. The images in (c) and (e) are those obtained via the corresponding Gaussian matrices. This figure states that Euler Square based CS matrices provide competitive reconstruction performance. The corresponding reconstruction errors are reported in Table  \ref{fig:Tab1}.}
\label{fig:1}
\end{figure} 

\begin{table}[h!t]
\centering
\begin{tabular}{|c|c|c|}
				 \hline
				Down-sampling & Euler recovery & Gaussian recovery \\
				factor ($\frac{M}{m}$) &  SNR error &  SNR error (Average error) \\ \hline
				4&13.36& 13.44 \\
				2.6 & 16.44 & 15.03 \\
				2 & 19.63	 & 18.14 \\
				1.6 & 20.61 & 19.74 \\
			  								\hline
									
 \end{tabular}
    \caption{A comparative error analysis of reconstruction by  Euler based and Gaussian matrices for different down-sampling factors $(\frac{M}{m})$ 4, 2.6, 2, 1.6. The average error over 1000 iterations is reported for Gaussian matrices.}
		\label{fig:Tab1}
	\end{table}			
  
\section{Content based image retrieval (CBIR) using Euler Square matrices} \label{sec:5}
  In this section, the usefulness of binary matrices so constructed for the 
CBIR of medical databases is demonstrated.
  As more and more hospitals use \textit {picture archiving and communication systems} (PACS), the medical imagery world wide is increasingly acquired, transferred and stored digitally \cite{Lehmann2005}.
The increasing dependence on modern medical diagnostic techniques like radiology, histopathology and computerized tomography has led to an explosion in the number of medical images stored in hospitals. Digital image retrieval technique is crucial in the emerging field of medical image databases for clinical decision making process.
It can retrieve images of similar nature (like same modality and disease) and characteristics.  
A typical CBIR system involves 2 steps, namely: 1) feature extraction or dimensionality reduction and 2) retrieval of relevant images through similarity metric. 
\par In fields like tele-medicines \cite{ag_2010, sandy_2013} the dimensionality reduction (DR) is needed for the analysis and classification of medical images, which is followed by the reconstruction at diagnostic center. Though conventional DR approaches \cite{cbir_rev} project data to very low-dimensional spaces than the CS based method, in general, they fail to provide faithful reconstruction from reduced dimension. The DR based on Principal Component Analysis (PCA) has the potential to provide reconstruction as well. Nevertheless, it may be noted that PCA and Compressed Sensing become effective under different sets of conditions \cite{PCA}. Most importantly, the PCA based CBIR is very time-consuming as it is a data-driven approach. In \cite{CS_2013}, a dictionary learning (DL) based CBIR approach is presented, which learns dictionaries in Radon transform domain. The sparsity seeking DL approaches typically exploit the framework of under-determined setting and hence work on some implicit assumptions on database. In CBIR applications, when data bases are not big enough, the sparsity seeking under-determined frame work may not be deployed efficiently. In addition, when labeled data are not used (as is the case with present work), one may not have enough members in a cluster, which prevents the applicability of Dictionary Learning \cite{Elad_2010}. The present CBIR application involving the dimensionality reduction of database members is not prone to these problems.
\par In general the CBIR approaches exploit the properties of database members for feature extraction \cite{CS_2013, Murala_2012}. For example, the CBIR method in \cite{Murala_2012} uses the distribution of edges, and hence appears to work well on databases where the members have pronounced edges. The sparsity based methods, on the other-hand, exploit the presence of inherent sparsity present in most of the databases. In what follows, relevant features are extracted by projecting image content into lower dimensional space. 
  \par The database members $ \{ I_l \}^{N}_{l=1} $ are divided into smaller patches  $ \{ I_{l,p} | \: l = 1,2,..., N \; $and$ \: p = 1,2,...,M' \} $ of equal size. Here, $M'$ stands for the number of patches  being carved out of each of database members. The vectorized versions of  $ I_{l,p} $ are then decomposed into the wavelet domain to generate sparse vectors, say  $ I'_{l,p} $ for each patch. It is to be emphasized here that one may use any transformation that sparsely represents each patch.
   A down-sampled copy of  $ I'_{l,p} $ is generated 
   via the binary sensing matrix $T$ as $ I''_{l,p} = T  I'_{l,p} $.
  If  $ I'_{l,p} $ is sparse enough,  $ I'_{l,p} $ (and consequently  $ I_{l,p} $) can be recovered from the reduced vector  $ I''_{l,p} $ using $l_1$ norm minimization technique. Finally, the feature vector of $l^{th}$ database member is obtained as  $ \{ I''_{l, p} | \: p = 1,2,3,...,M' \} $, which is very small in size compared to  $ \{ I_{l, p} | p = 1,2,...,M' \} $.
  Given the query image $Q$,  its feature vector 
$ \{ Q''_p | p = 1,2,..., M' \}$ is generated similarly. For retrieving the 
images of database that are similar in content with the  query image, 
cross-correlation is employed as a similarity metric (other criteria are also 
possible). The proposed CBIR method is shown in Figure~\ref{fig:prop}.
  
   \begin{figure}[h!t]
                 \centering   
                       \includegraphics[width =\linewidth]{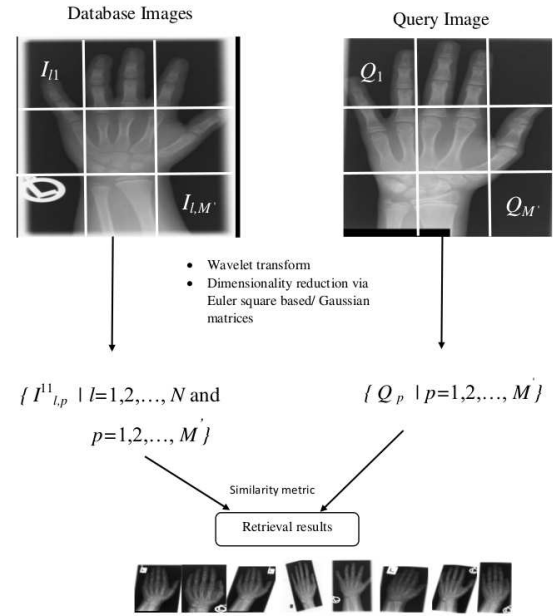}
                     \caption{Block diagram of the proposed CBIR method.}
										\label{fig:prop}
        \end{figure}
  \par The performance of the image retrieval task is measured in terms of 
  recall $\it R$ = $\it N_c$/$\it N_m$ and precision $\it P$ =  $\it N_c$ / $\it (N_c+N_f)$ 
 where $\it N_m$ is the total number of actual (or similar) images, $\it N_c$ is the number of images detected correctly, and $\it N_f $ is the number of false
alarms. A good performance requires both recall and precision to be high, that is, close to unity.
Recall is the portion of total relevant images retrieved whereas precision indicates the capability to retrieve relevant images.

\subsection{Experimental results} \label{sec:6}
 For evaluating proposed CBIR, a database of 821 images present in the form of 12 classes (Figure~\ref{fig:Comp}) has been chosen. These classes of images containing skull, breast, chest and hand etc are taken 	from the popular IRMA database\footnote{www.irma-project.org}. Another 240 are considered as query images.
 
 \par From the database, $ \{ I''_{l,p} | l = 1,2,3,..., 821 ; \: p = 1,2,3,...,64 \} $ and $ \{ I'''_{l,p} | l = 1,2,3,..., 821 ;  p = 1,2,3,...,64 \} $ have been generated.  $I''_{l,p} , I'''_{l,p}$ are obtained using 
 binary and Gaussian matrices respectively for the purpose of comparison of their performances in CBIR.
                
The query image is compared with all database members in compressed domain using cross-correlation as similarity measure for retrieving top 10  similar images.
The precision and recall of binary and Gaussian based feature vector with 10 query images per class are shown in Table \ref{tab:2}. In the simulation work, 
Haar wavelets have been used for sparsely representing each patch.

 \begin{figure}[h!t]
   \centering
                        \includegraphics[width=8cm]{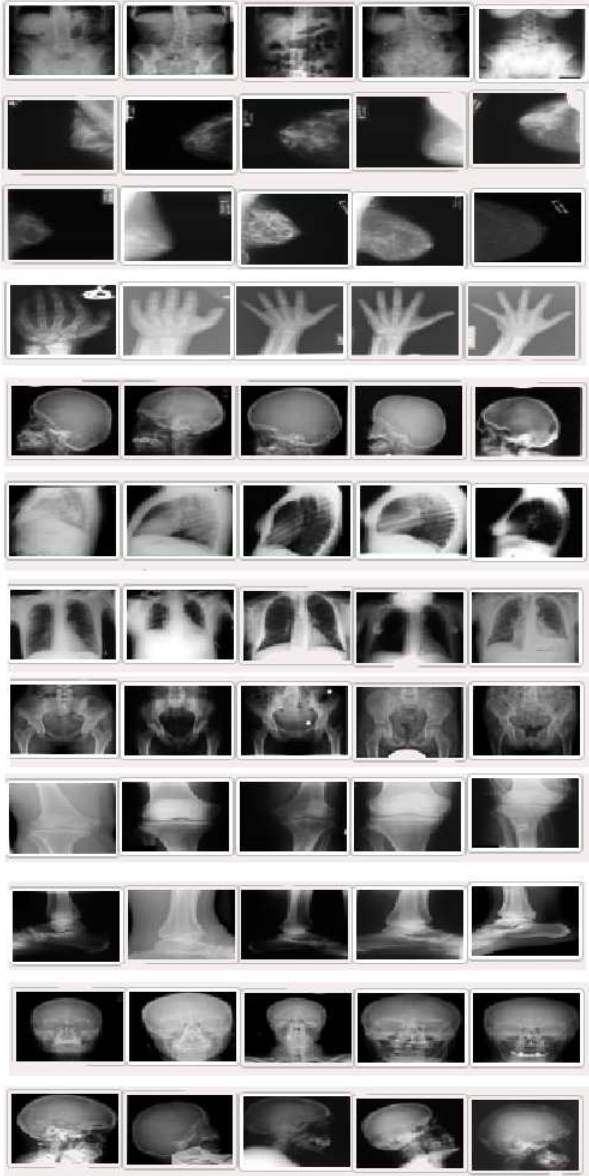}
                    \centering    \caption{Some of the images from each of 12 classes of database.}
										\label{fig:Comp}
                \end{figure}

\begin{table}[h!t]
\centering



\begin{tabular}{|c|c|c|c|c|}
\hline
\multicolumn{1}{|l|}{\multirow{3}{*}{\textbf{Classes}}} & \multicolumn{4}{c|}{\textbf{10 query images per class}}                                    \\ \cline{2-5} 
\multicolumn{1}{|l|}{}                                  & \multicolumn{2}{c|}{\textbf{Euler matrix}} & \multicolumn{2}{c|}{\textbf{Gaussian matrix}} \\ \cline{2-5} 
\multicolumn{1}{|l|}{}                                  & \textbf{Precision}    & \textbf{Recall}    & \textbf{Precision}      & \textbf{Recall}     \\ \hline
\textbf{C1}                                             & 81                    & 47.8               & 90                      & 58.1                \\ \hline
\textbf{C2}                                             & 100                   & 97                 & 100                     & 85.5                \\ \hline
\textbf{C3}                                             & 98                    & 89                 & 97                      & 81                  \\ \hline
\textbf{C4}                                             & 89                    & 58.7               & 92                      & 75.2                \\ \hline
\textbf{C5}                                             & 76                    & 48.9               & 74                      & 45                  \\ \hline
\textbf{C6}                                             & 95                    & 69                 & 98                      & 71                  \\ \hline
\textbf{C7}                                             & 77                    & 39                 & 66                      & 36                  \\ \hline
\textbf{C8}                                             & 91                    & 61                 & 92                      & 45                  \\ \hline
\textbf{C9}                                             & 98                    & 82                 & 95                      & 79                  \\ \hline
\textbf{C10}                                            & 100                   & 69                 & 100                     & 61                  \\ \hline
\textbf{C11}                                            & 100                   & 85                 & 100                     & 82                  \\ \hline
\textbf{C12}                                            & 100                   & 46                 & 94                      & 37                  \\ \hline
\textbf{Average}                                        & \textbf{92}           & \textbf{67.4}      & \textbf{91.5}           & \textbf{63.4}       \\ \hline
\end{tabular}
\caption{ Performances  $($in$ \: \% \: $terms$)$ of the proposed CBIR method based on the  Gaussian and binary  matrices.}
\label{tab:2}
\end{table}

The classification performances of different classes by	 binary and Gaussian matrices are shown in Figures~\ref{fig:conE} and \ref{fig:conG} in terms of a confusion matrix.
The confusion matrix gives the accuracy of the classification results.
The diagonal elements of the confusion matrix indicate if each class is classified correctly. The corresponding nonzero column  entries indicate the presence of mis-classification by the CBIR scheme.
From Table \ref{tab:2}, it may be concluded that the binary based retrieval gives the performance comparable to that of its Gaussian counter part. 
The retrieval performances by both matrices may be improved by fine-tuning the associated parameters (like patch size) and by taking different sparsifying basis. Since the main focus of the present work is to construct CS matrices of general size, we do not go into the details of improvement of performance of CBIR and the reconstruction of database member from the reduced dimensional vectors any further.

 \begin{figure}[h!t]
                 \centering   
                        \includegraphics[width=9.5cm, height=7cm]{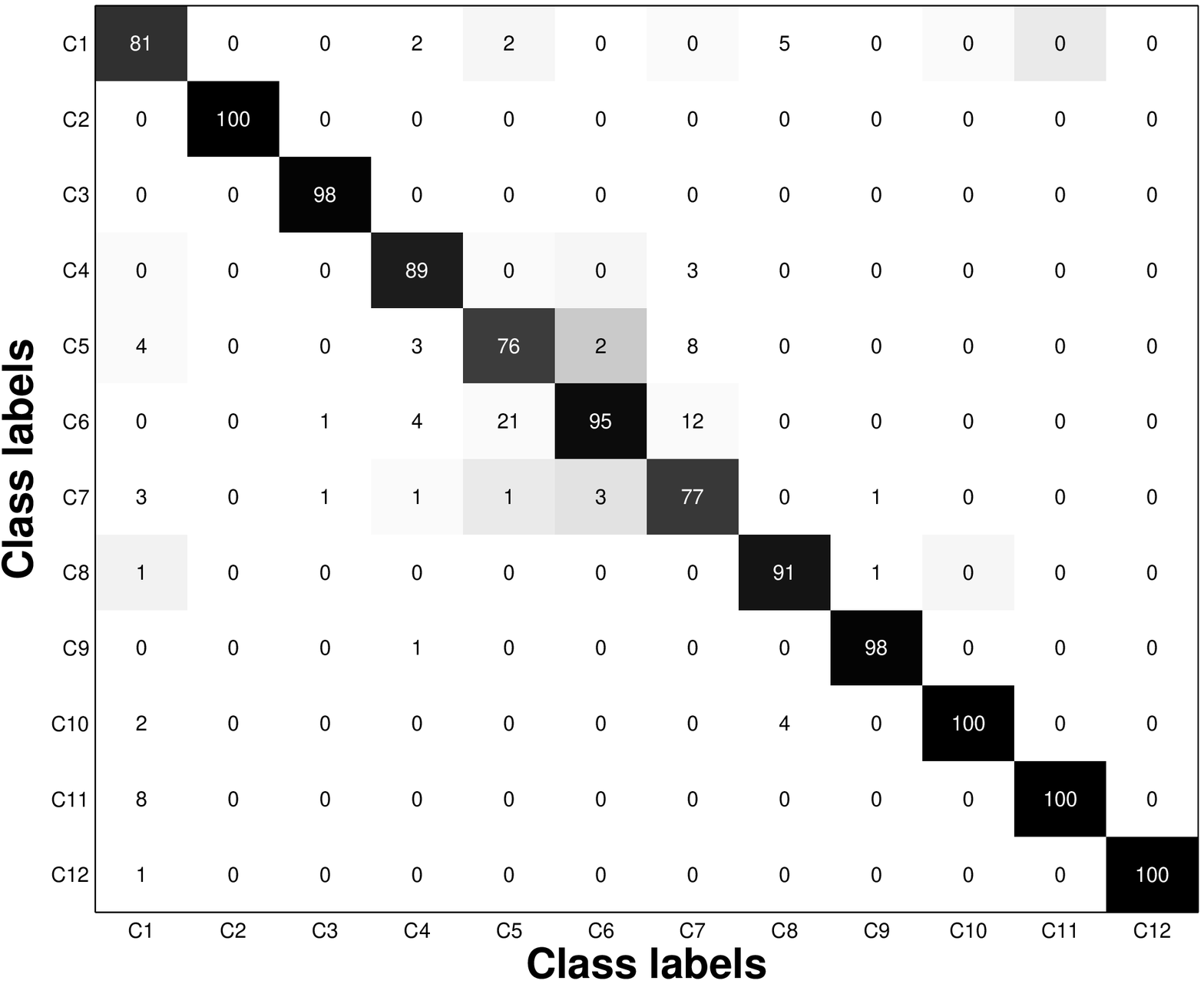}
                    \centering    \caption{Confusion matrix of Euler based binary matrix based CBMIR.}
										\label{fig:conE}
      \end{figure}

 \begin{figure}[h!t]
                 \centering   
                        \includegraphics[width=9.5cm, height=7cm]{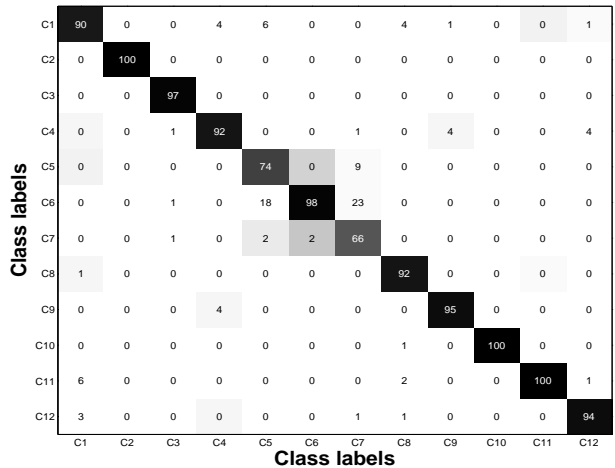}
                    \centering    \caption{Confusion matrix of Gaussian matrix based CBMIR.}
										\label{fig:conG}
      \end{figure}

\section{Concluding Remarks} \label{sec:7}
In this paper, Euler Squares of index $(n,k)$ are used to construct binary sensing matrices of size $nk\times n^2$. Using these and with the help of prime factorization, matrices of general row size with asymptotically optimal column size and good coherence have been constructed. Further, using block wise extension, the column size of the constructed matrices has been enhanced. Simulation results on two test cases show that the generated matrices capture the support of the unknown vector and give better performance than Gaussian matrices. It has also been demonstrated that the proposed binary sensing matrices exploit the pattern of inherent sparsity present in natural images and project data into lower dimensional spaces in such a way that the reduced vectors are useful for the purpose of CBIR.

\section{\bf Acknowledgments} \label{sec:8}
The first author gratefully acknowledges the support (Ref No. 20-6/2009(i)EU-IV) that he receives from UGC, Govt of India. The third author is grateful  to CSIR, Gov. of India (No. 25(219)/13/EMR-II) for the financial support that he received. Authors are thankful to Dr. S. Jana, Dr. M. Srinivas, Mr. Sandip Chandra and Mr. Pradip Sasmal for helpful discussions. 


\end{document}